\newtheorem{theorem}{Theorem}
\newtheorem{lemma}{Lemma}
\theoremstyle{definition}
\theoremstyle{remark}
\numberwithin{equation}{section}
\newcommand*\un{\operatornamewithlimits{\cup}}
\begin{document}
\title{ Embeddings in Lie algebras of subexponential growth }

%

\author{Adel Alahmadi}
\address{Department of Mathematics, Faculty of Science, King Abdulaziz University, P.O.Box 80203, Jeddah, 21589, Saudi Arabia}
\email{analahmadi@kau.edu.sa}
\author{Hamed Alsulami}
\address{Department of Mathematics, Faculty of Science, King Abdulaziz University, P.O.Box 80203, Jeddah, 21589, Saudi Arabia}
\email{hhaalsalmi@kau.edu.sa}
%
%

\keywords{growth functions, Lie algebra\\
Mathematics Subject Classification 2010: 17B65, 16P90}
\maketitle

\begin{abstract}
We prove that an arbitrary countable dimensional Lie algebra over a field of characteristic $\neq 2$ that is locally of subexponential growth is embeddable in a finitely generated Lie algebra of subexponential growth.
\end{abstract}

\maketitle

\section{Introduction and  Main Result}

 Throughout the paper all algebras are considered over a field $F$ of characteristic $\neq 2.$ Let $A$ be a (not necessarily associative) algebra generated by a finite dimensional subspace $V$. Let $V^{n}, n\geq 1,$ be the subspace of the algebra $A$ spanned by all products $v_1\cdots v_k, k \leq n, v_i \in V,$ with all possible arrangements of brackets. We have $V = V^1 \leq V^2\cdots ; \un\limits_n V^n=A; \,\,dim_F V^n < \infty, n \geq 1.$ The function $g(V,n)=dim_F V^n$ is called the growth function of $A$ with respect to the generating subspace $V.$

Let $N=\{1 , 2, \cdots\}$ denote the set of positive integers. Given two increasing functions $f,g: N \rightarrow [1, \infty),$  we say that $f$ is asymptotically less or equal to $g\, (f \preceq g)$  if there exists $C \in N$ such that $f(n) \leq C g(Cn)$ for all $n \geq 1.$
If $f \preceq g$ and $g \preceq f$ then we say that $f$ and $g$ are asymptotically equivalent, $f \sim g.$ For any two finite dimensional generating subspaces $V, W$ of the algebra $A$ we have $g (V, n) \sim g(W, n).$ By the growth $g_A$ of the algebra $A$ we mean the class of equivalence of $g(V, n).$

A function $f: N \rightarrow [1, \infty)$ is said to be subexponential if $\displaystyle\lim\limits_{n\to \infty}\frac{f(n)}{e^{\alpha n}}=0$ for any $\alpha > 0.$ An algebra $A$ is of subexponential growth if $g(V, n)$ is a subexponential function. For a not necessarily finitely generated  algebra $A$, we say that $A$  is locally of subexponential growth if every finitely generated subalgebra of $A$ is of subexponential growth.

In 1949, G. Higman, B. Neumann and H. Neumann [4] proved that an arbitrary countable group is embeddable in a finitely generated group. A. I. Malcev [5] and A. I. Shirshov [6] proved analogous results for countable dimensional associative and Lie algebras respectively. In [3] L. Bartholdi and A. Erschler showed that an arbitrary countable group  that is locally of subexponential growth is embeddable in a finitely generated group of subexponential growth. The key role in their construction was played by wreath products with the infinite cyclic group. In [2], we used matrix wreath products of algebras to embed an arbitrary countable dimensional associative algebra that is locally of subexponential growth into a finitely generated associative algebra of subexponential growth. In this paper, we obtain a similar result for Lie algebras.

\medskip

\begin{theorem}
 Let char $F \neq 2.$ An arbitrary countable dimensional Lie $F-$algebra that is locally of subexponential growth is embeddable in a finitely generated Lie algebra of subexponential growth.
 \end{theorem}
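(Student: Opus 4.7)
The plan is to adapt the strategy of Bartholdi--Erschler (groups) and the authors' own matrix wreath product construction (associative algebras) to the Lie setting by introducing a suitable Lie-theoretic wreath product. The overall architecture is: build a large Lie algebra $W$ containing $A$, find a finitely generated subalgebra $L \subseteq W$ containing $A$, and verify that $L$ has subexponential growth by exploiting the local subexponential hypothesis on $A$.

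\textbf{Construction.} Fix a countable basis (or generating set) $\{a_i\}_{i \geq 1}$ of $A$. Choose a small Lie algebra $D$ of polynomial growth playing the role of $\mathbb{Z}$ in the group setting; a natural choice is the one- or two-dimensional abelian Lie algebra, whose universal enveloping algebra $U(D)$ is a polynomial algebra and, by the PBW theorem, also has polynomial growth. Form the abelian Lie algebra $M := A \otimes_F U(D)$, made into a Lie module over $D$ by letting $D$ act on the second tensor factor via the right regular action on $U(D)$ (equivalently, by derivations induced from right multiplication). Let
\[
W := M \rtimes D
\]
be the resulting semidirect sum. The map $a \mapsto a \otimes 1$ embeds $A$ as a Lie subalgebra of $M \subseteq W$.

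\textbf{Finite generation.} Choose a strictly increasing sequence $n_1 < n_2 < \cdots$ of positive integers (the ``spacing'') growing fast enough to be determined later, together with a finite generating set $d_1, \ldots, d_r$ of $D$. Encode the basis of $A$ into a single (or a few) carefully chosen elements of $M$: roughly, one seeks an element $\xi \in M$ whose successive brackets with monomials in the $d_j$ recover each $a_i \otimes 1$ at bracket depth approximately $n_i$. Finiteness of the generating set of $W$ is delicate because $M$ is infinite-dimensional; this is where the Lie analog of the matrix wreath trick must be invoked, possibly by allowing the ``coefficient'' tensor factor $U(D)$ to act through a slightly enlarged bimodule so that a single element of $M$ has enough ``room'' to encode every $a_i$. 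Set $L := \langle d_1, \ldots, d_r, \xi \rangle \subseteq W$; one shows $A \subseteq L$ by exhibiting each $a_i \otimes 1$ as an iterated bracket.

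\textbf{Growth control.} Let $V$ be the generating subspace of $L$; estimate $g(V,n) = \dim V^n$ by separating the $D$-component (contributing at most the polynomial growth of $D$) from the $M$-component. An element of $V^n$ involves at most the first $k = k(n)$ basis elements of $A$, where $k(n)$ is determined by the spacing: if the $n_i$ grow fast enough, then $k(n)$ grows slowly. The $M$-part of $V^n$ is contained in $A_{k(n)} \otimes U(D)_{\leq n}$, where $A_{k(n)} := \langle a_1,\ldots, a_{k(n)}\rangle$ is the finitely generated, hence subexponentially growing, subalgebra of $A$ supplied by the hypothesis. Thus, schematically,
\[
g(V,n) \;\leq\; C\,n^{c}\cdot g\bigl(A_{k(n)},\, n\bigr),
\]
and by choosing $n_i$ to tend to infinity fast enough (e.g.\ iterated exponentially) one forces this bound to be subexponential in $n$.

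\textbf{Main obstacle.} The crux is the simultaneous achievement of finite generation and subexponential growth. Faster spacing $n_i$ makes the growth estimate easier but makes it harder to generate $A$ from finitely many elements; slower spacing helps finite generation but risks destroying subexponentiality. Moreover, the precise Lie-theoretic wreath product has to be rigid enough to behave well under tensoring with $U(D)$ (no unwanted cancellations or relations that spoil the bracket-depth estimates), yet flexible enough to let a single element $\xi$ carry all the data of $A$. Verifying these quantitative bounds, and in particular extracting a subexponential upper bound from the submultiplicative combination of the polynomial growth of $U(D)$ and the a priori only subexponential growth of each $A_{k(n)}$, will be the technical heart of the proof.
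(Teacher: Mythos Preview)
Your proposal has a genuine gap at the very first step, and the overall architecture diverges sharply from the paper's proof.

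\textbf{The embedding fails.} You declare $M = A \otimes_F U(D)$ to be an \emph{abelian} Lie algebra and then assert that $a \mapsto a \otimes 1$ embeds $A$ as a Lie subalgebra of $M$. But if $M$ is abelian, then $[a\otimes 1,\,b\otimes 1]_M = 0$ for all $a,b$, while $[a,b]_A$ need not vanish; the map is not a Lie homomorphism unless $A$ itself is abelian. If instead you give $M$ the bracket $[a\otimes u,\,b\otimes v] = [a,b]\otimes uv$ (which is a Lie bracket when $U(D)$ is commutative), then the action of $d\in D$ by right multiplication on the $U(D)$-factor is \emph{not} a derivation of $M$: one computes $d\cdot[a\otimes u,\,b\otimes v] = [a,b]\otimes uvd$, whereas the Leibniz rule produces $2[a,b]\otimes uvd$, so $W=M\rtimes D$ is not a Lie algebra (precisely because $\mathrm{char}\,F\neq 2$). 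Thus neither reading of your construction yields a Lie algebra containing $A$. Beyond this, your own ``Main obstacle'' paragraph concedes that the simultaneous finite-generation and growth control are not carried out; as written, the proposal is a plan, not a proof.

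\textbf{What the paper actually does.} The paper avoids building any Lie wreath product. Instead it passes to the associative world and back: embed $L$ into its universal enveloping algebra $U(L)$, which by M.~Smith is locally of subexponential growth; embed $U(L)$ into a countable-dimensional associative algebra $A$ with image inside $[A,A]$ (the paper's Lemma); invoke the authors' earlier associative embedding theorem to put $A$ inside a finitely generated associative algebra $B$ of subexponential growth; finally set $C=M_2(B)$ and use that $[C,C]$ is a finitely generated Lie algebra (by their result with Jain and Zelmanov on idempotents) whose growth is dominated by that of $C\sim B$. The chain $L\hookrightarrow U^{(-)}\hookrightarrow [A,A]\hookrightarrow [B,B]\hookrightarrow [C,C]$ finishes the argument. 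All the hard growth estimates are thus outsourced to the associative case already handled in~[2], and the Lie-specific content reduces to the Lemma and the finite generation of $[M_2(B),M_2(B)]$.
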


 \begin{lemma}
 Let $A$ be a countable dimensional associative algebra that is locally of subexponential growth. Then there is an embedding $\varphi:A\rightarrow B$ into another countable dimensional associative algebra that is locally of subexponential growth, such that $\varphi(A)\subseteq [B, B].$
\end{lemma}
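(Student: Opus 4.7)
My plan is to realize every element of $A$ as a commutator by tensoring $A$ (after adjoining a unit) with an auxiliary algebra in which $1$ itself is a commutator. The natural choice, available in any characteristic, is the first Weyl algebra $W = F\langle x,y\rangle / (xy - yx - 1)$: it admits the PBW basis $\{x^i y^j\}_{i,j \geq 0}$, it has polynomial growth of degree $2$, and by construction $1_W = [x,y]$.

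Concretely, I would let $A^1$ denote $A$ with an identity adjoined, set $B = A^1 \otimes_F W$, and define $\varphi : A \to B$ by $\varphi(a) = a \otimes 1$. The map $\varphi$ is injective because $W$ is a free $F$-module, and the commutator condition is a one-line computation:
$$ \varphi(a) \;=\; a \otimes (xy - yx) \;=\; [\, a \otimes x,\; 1 \otimes y\,] \;\in\; [B,B], $$
valid for every $a \in A$. Note that passing to the unitization $A^1$ is precisely what makes the factor $1 \otimes y$ available in $B$.

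The substantive step is to verify that $B$ is locally of subexponential growth. Given a finitely generated subalgebra $B_0 \subseteq B$, I would expand each of its finitely many generators as $\sum_j a_j \otimes w_j$, and then let $A_0 \subseteq A^1$ and $W_0 \subseteq W$ be the finitely generated subalgebras generated (together with the respective identities) by the $a_j$'s and the $w_j$'s. Then $B_0 \subseteq A_0 \otimes W_0$. By hypothesis $A_0$ has subexponential growth, while $W_0$, being a finitely generated subalgebra of the polynomial-growth Weyl algebra, has polynomial growth. The standard tensor-product estimate
$$ g_{A_0 \otimes W_0}(n) \;\leq\; (n+1) \max_{0 \leq k \leq n} g_{A_0}(k)\, g_{W_0}(n-k) $$
is then subexponential, and $B_0$, as a finitely generated subalgebra of the finitely generated subexponential algebra $A_0 \otimes W_0$, inherits subexponential growth.

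The only mildly delicate point I anticipate is the growth bookkeeping, in particular the two auxiliary facts that (i) adjoining an identity preserves local subexponential growth, and (ii) a finitely generated subalgebra $U \subseteq V$ of a finitely generated subexponential algebra is again of subexponential growth. Both reduce to the elementary observation that if the generators of $U$ lie in $V^m$, then $g_U(n) \leq g_V(mn)$, and subexponentiality is invariant under such a rescaling of $n$. Beyond this bookkeeping, all of the algebraic content of the lemma is concentrated in the Weyl algebra identity $[x,y] = 1$.
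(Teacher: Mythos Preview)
Your argument is correct and genuinely different from the paper's. The paper realizes $A$ inside the algebra of column-finite $\mathbb{N}\times\mathbb{N}$ matrices over $A$ via the shift operators $E_1(a)=\sum_i e_{i,i+1}(a)$ and $E_{-1}(1)=\sum_i e_{i+1,i}(1)$, using the Toeplitz-type identity $[E_1(a),E_{-1}(1)]=e_{1,1}(a)$; the growth estimate then follows from a direct structural analysis showing $W^n\subseteq M_{[1,n]\times[1,n]}(V^n)+\sum_{|i|\le n}E_i(V^n)$. Your route replaces this matrix machinery by the single algebraic fact $1_W=[x,y]$ in the Weyl algebra and reduces the growth question to the standard tensor-product bound $g_{A_0\otimes W_0}(n)\le g_{A_0}(n)\,g_{W_0}(n)$ (your $(n+1)$ factor is harmless but unnecessary once $1$ lies in both generating spaces). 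The trade-off: your proof is shorter, more conceptual, and works uniformly in all characteristics since the PBW basis $\{x^iy^j\}$ for $W$ is valid as an Ore extension regardless of $\operatorname{char}F$; the paper's construction, on the other hand, stays within the matrix-wreath-product framework that drives the rest of the programme (in particular the cited embedding theorem from [2]), so it is thematically coherent with the tools used in the proof of Theorem~1. Your bookkeeping remarks (i) and (ii) are accurate and suffice to close the argument.
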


 \begin{proof}
 Without  loss of generality, we will assume that the algebra $A$ contains an identity element $1.$
 Let $N=\{1,2,\cdots \}$ be the set of positive integers. Consider the algebra $M_{N \times N}(A)$ of infinite $N \times N$ matrices over $A$ having finitely many nonzero entries in each column.
 For an element $a \in A$ and integers $i, j \in N,\,$ let $e_{i,j}(a)$ denote the matrix having the element $a$ at the intersection of the $i-$th row and $j-$th column and zeros everywhere else.
 Consider the subalgebra $B$ of $M_{N \times N}(A)$ generated by infinite matrices $E_1(a)=\displaystyle\sum_{i=1}^{\infty}e_{i,i+1}(a), a \in A,$ and $E_{-1}(1)=\displaystyle\sum_{i=1}^{\infty}e_{i+1,i}(1)$
 We have $[E_{1}(a), E_{-1}(1)]=e_{1,1}(a).$
 Hence, $\varphi:a\rightarrow e_{1,1}(a), a\in A,$ is an embedding of the algebra $A$ into the algebra $B$ and $\varphi(A) \subseteq [B, B].$
 It remains to show that the algebra $B$ is locally of subexponential growth. Choose arbitrary elements $a_1,...,a_m\in A.$ We need to prove that the algebra $B^{'}$ generated by $E_1 (a_1),..., E_{1}(a_m),E_{-1}(1)$ has subexponential growth.

 Consider the finite dimensional subspaces $V=Span_{F} (a_1,...,a_m, 1) \subset A,$ $W=Span_{F} (E_1(a),...,E_1(a_m), E_{-1}(1))\subset B.$
 For $k \geq 0$ and an element $a \in A$ denote $E_k(a)=\displaystyle\sum_{i=1}^{\infty}e_{i,i+k}(a).$ For $k \leq -1$ and an element $a \in A$ denote $E_k(a)=\displaystyle\sum_{i=1}^{\infty}e_{i-k,i}(a).$ We will show that for an arbitrary $n \geq 1$

 \begin{align}
& W^{n} \subseteq M_{[1, n]\times [1,n]} (V^{n})+\displaystyle\sum_{i=-n}^{n}E_{i}(V^{n}), \label{eq1}
\end{align}

where $M_{[1, n]\times [1,n]} (V^{n})=\displaystyle\sum_{1 \leq i,j \leq n}F e_{i,j}(V^{n}).$
Choose two nonnegative integers $p, q \in \mathbb{Z};\, p, q \geq 0,$ and elements $a, b \in A.$
Straightforward computations show that $E_p (a) E_q (b)=E_{p+q} (ab),\,\, E_{-p} (a) E_{-q} (b)= E_{-p-q}(ab),\,\, E_p (a) E_{-q} (b)=E_{p-q}(ab).$ However, $E_{-q}(a) E_{p}(b)=\displaystyle\sum_{i=1}^{\infty}e_{i+q,i}(a).\displaystyle\sum_{j=1}^{\infty}e_{j,j+p}(b)$

\[= \begin{cases}
      E_{p-q} (ab)-\displaystyle\sum_{i=1}^{q}e_{i,i+p-q}(ab) & if~ p \geq q, \\
      E_{p-q} (ab)-\displaystyle\sum_{i=1}^{p}e_{i+q-p,i}(ab) & if~ p < q.
   \end{cases}
\]
 Finally, $e_{i,j} (a) E_{\pm p} (b)=e_{i,j\pm p} (ab)$ if $j \pm p \geq 1.$ If $j \pm p < 1$ then $e_{i,j}(a) E_{\pm p}(b)=0.$ Similarly, $E_{\pm p}(a) e_{i,j}(b)=e_{\mp p+i,j} (ab)$, if $i \mp p \geq 1$ and $0$ otherwise. The multiplication rules above imply the inclusion (\ref{eq1}). From the inclusion (\ref{eq1}) it follows that $g_{B^{'}}(W,n)\leq n^{2} g_{A} (V, n)+(2n+1) g_{A} (V,n).$ Hence, the function $g_{B^{'}}(W,n)$ is subexponential. This completes the proof of the lemma.
 \end{proof}

\begin{proof}[Proof of Theorem 1.]
Let $L$ be a countable dimensional Lie algebra that is locally of subexponential growth. Let $U=U(L)$ be the universal associative enveloping algebra of $L.$ As shown by M. Smith [7] the algebra $U$ is locally of subexponential growth. By the lemma, the algebra $U$ is embeddable in a countable dimensional algebra $A$, that is locally of subexponential growth and the image of $U$ is contained in the subspace $[A, A].$
By the Theorem 2  from [2], the algebra $A$ is embeddable in a finitely generated algebra $B$ of subexponential growth. Without loss of generality, we will assume that $B \ni 1.$

Consider the algebra $C=M_2(B)$ of $2\times2$ matrices over $B.$ In [1], we showed that if a finitely generated algebra $R$ over a field of characteristic $\neq 2$ contains  an idempotent $e$ such that $R e R=R(1-e) R=R$ then the Lie algebra $[R, R]$ is finitely generated. This implies that the Lie algebra $[C, C]$ is finitely generated. The growth of the Lie algebra $[C, C]$ is less or equal to the growth of the associative algebra $C$ that is asymptotically equivalent to the growth of the algebra $B.$ Hence, $[C, C]$ is a finitely generated Lie algebra of subexponential growth.

We have the embeddings $L \rightarrow U^{(-)}\rightarrow [A, A]\rightarrow [B, B]\rightarrow [C, C],$ which completes the proof of the theorem.
\end{proof}
\newpage

\end{document}